\numberwithin{equation}{section}
\newtheorem{thm}{Theorem}[section]
\newtheorem{lem}[thm]{Lemma}
\newtheorem{remark}[thm]{Remark}
\begin{document}

\begin{center}
\textbf{{\large {\ On determining the fractional exponent of the subdiffusion equation }}}\\[0pt]
\medskip \textbf{Shavkat Alimov$^{1}$}\\[0pt]
\textit{sh\_alimov@mail.ru\\[0pt]}
\medskip \textbf{Ravshan Ashurov$^{2}$}\\[0pt]
\textit{ashurovr@gmail.com\\[0pt]}
\medskip \textit{\ $^{1}$ Mirzo Ulugbek National University of Uzbekistan 4, University St., Tashkent 700174, Uzbekistan}

\medskip \textit{\ $^{2}$ Institute of Mathematics, Academy of Science of Uzbekistan}

\end{center}

\textbf{Abstract}: Determining the unknown order of the fractional derivative in differential equations simulating various processes is an important task of modern applied mathematics. In the last decade, this problem has been actively studied by specialists. A number of interesting results with a certain applied significance were obtained. This paper provides a short overview of the most interesting works in this direction. Next, we consider the problem of determining the order of the fractional derivative in the subdiffusion equation, provided that the elliptic operator included in this equation has at least one negative eigenvalue. An asymptotic formula is obtained according to which, knowing the solution at least at one point of the domain under consideration, the required order can be calculated.

\vskip 0.3cm \noindent {\it AMS 2000 Mathematics Subject
Classifications} :
Primary 35R11; Secondary 34A12.\\
{\it Key words}:  subdiffusion equation, the Riemann-Liouville fractional derivative, inverse problem.

\section{Introduction}
In recent decades, specialists have found many new applications of the theory of differential equations with fractional derivatives in many completely different fields of science and technology (see, [1-5]). At the same time, methods are being developed for solving and establishing various properties of solutions to initial boundary value problems for fractional differential equations (see, e.g., [6-11]).

This work is devoted to the study of inverse problems for fractional order differential equations. Inverse problems in the theory of differential equations of integer and fractional orders are usually called problems in which, along with solving the differential equation, it is also necessary to determine the coefficient(s) of the equation and/or the right-hand side (source function). Note that interest in the study of inverse problems is due to the importance of their applications in many areas of modern science, including mechanics, seismology, medical tomography, epidemics, geophysics, etc. (see for classical differential equations the fundamental monograph by S.I. Kabanikhin [12] and for fractional differential equations - handbook  [1]). A number of works by the authors are devoted to inverse problems of determining the right-hand side of various fractional differential equations (see, e.g., [7], [13-18] and references therein).

In this paper, we study a relatively new inverse problem that arises only when modeling a
process using fractional order differential equations, namely, the inverse problem of determining
the order of a fractional derivative. When considering equations with fractional derivatives, the
order of such derivatives is not always known in advance, and what is very important, there are
no instruments for measuring them. Therefore, in the last 10-15 years, specialists have been
actively studying inverse problems of this type. An overview of works published before 2019 can be found in work [19].

In particular in the following two works Li et al. [20] and [21], the authors proved the uniqueness of the order of fractional derivative for the multi-term time-fractional diffusion equation and distributed order fractional diffusion equations  correspondingly.

In the next two papers [22] and [23] the authors studied inverse problems for the simultaneous determination of two parameters $\rho$ and $\sigma$ for an equation $D^\rho_t u -\Delta^\sigma u=0$, $\rho, \sigma\in (0,1)$, in the $n$-dimensional domain $\Omega$, where $D_t^\rho$ is the Caputo derivative and $\Delta$ is the Laplace operator. It is proven that the data $u(x_0, t)$, $0\leq t\leq T$, at a monitoring point $x_0\in \Omega$ guarantee the uniqueness of both parameters.

Let us make one general remark about all the cited works in [19]. Let us consider the initial-boundary value problem for the subdiffusion equation $D^\rho_tu -\Delta u=0$ in the domain $\Omega$. In order to determine the order of the fractional derivative, the authors used information about the solution of the direct problem in a certain time interval and a fixed spatial variable: $u(x_0, t) =\mu(t)$, $0\leq t\leq T$, $x_0\in \Omega$ and $\mu$ is a given function. In all the works listed in the review article [19], only the uniqueness of the solution to the inverse problem was proven. But on the other hand, by specifying function $\mu(t)$ in an arbitrary manner, it is in principle impossible to prove the existence of parameter $\rho$, since $x_0\in \Omega$, and therefore function $\mu(t)$ must satisfy the equation in question and be infinitely differentiable.

It is in connection with this in the survey paper [19] by Z. Li et al. in the section of Open
Problems the authors noted: ”It would be interesting to
investigate inverse problem by the value of the solution at a fixed time as
the observation data”. Exactly this kind of research was carried out in work [24]: by setting the value of the integral $\int_\Omega u(x, t)dx$ at time instant $t=t_0$, the authors managed to prove not only the uniqueness but also the existence of the unknown parameter $\rho$. Then in work [25] a similar result was obtained for the fractional wave equation, in work [26]- for the Rayleigh-Stokes equation, in work [27]- for systems of fractional pseudodifferential equations, and finally in work  [28] - for fractional equations of mixed type.

In work [29], the inverse problem of simultaneously determining the order of the fractional derivative and the right-hand side of the equations is studied. The authors managed to find additional conditions that guarantee the existence and uniqueness of both unknowns.

Note that the method used in work [24] requires that the first eigenvalue of the elliptic part of the equation be equal to zero. In [30] and [31] another method was proposed and this limitation was removed; the existence and uniqueness of a solution to the inverse problem is proven for an arbitrary subdiffusion equation.

Let us pay attention to the fact that in all the listed works where the inverse problem is studied, the spectrum of the elliptic part of the equations is discrete. The question naturally arises: is it possible to solve the inverse problem in the case when the elliptic part has a continuous spectrum? Work [32] gives a positive answer to this question. Moreover, the authors considered a two-parameter inverse problem similar to the one considered in works [22] and [23]. Unlike those works, here the existence and uniqueness of both unknown parameters is proven.

Let us also note the following recent work Gongsheng Li at al. [33],  where for the first time the unique solvability of the inverse problem was proved with the available measurement, given at a single space-time point $(x_o, t_0)$ with sufficiently large $t_0$. Note that the authors considered the equation $D^\rho_tu -d\Delta u=0$, where $d$ is a sufficiently small number, in the domain $\Omega$ with dimension $n\leq 3$ and assumed that for a sufficiently large $J$ all eigenfunctions $v_j(x)$, $j\leq J$, of the Laplace operator with the Dirichlet condition are non-negative at the point $x_0$ where the additional condition is given and all the Fourier coefficients of the initial function are non-negative.

Let us dwell on the following two works [34] and [35] which are closest to our research, where formulas were obtained for calculating the unknown order of derivatives.

In the paper of Hatano et al. [34]  the
equation $D_t^\rho u =\Delta u$ is considered in $\Omega\subset \mathbb{R}^n$ with the Dirichlet boundary
condition and the initial function $\varphi(x)$. They proved
that if $\varphi\in C_0^\infty(\Omega)$,  $
\varphi(x)\geq 0$ or $\leq 0$ and is not identical to zero on $\overline{\Omega}$, then one can determine the order $\rho$ as
$$\rho =\lim\limits_{t\rightarrow
\infty}\big[tu_t(x_0,t)[u(x_0,t)]^{-1}\big],$$
with an arbitrary $x_0\in \Omega$. From this, in particular, it is clear that, the order of the derivative in this formula does not depend on the choice of $x_0$. If $\Delta
\varphi(x_0)\neq 0$ and $\varphi\in C_0^\infty(\Omega)$, then the authors also proved
that
$$\rho =\lim\limits_{t\rightarrow
0}\big[t u_t(x_0,t)[u(x_0,t)-\varphi(x_0)]^{-1}\big].$$

The authors of the work [35] managed to obtain, in the case of various ordinary fractional differential equations, a formula for calculating the unknown order of the derivative in the form of a similar limit, i.e. $\lim\limits_{t\to \infty} \big((tu')/u\big)$. When establishing this formula, the asymptotic behavior of the Mittag-Leffler functions and the following simple observation were taken as a basis: if we take the function $u(t)=t^\beta$, then it is easy to see that the specified limit is equal to $\beta$.

Despite the simplicity of these formulas, there is, in our opinion, one inconvenience when used in real-live problems. Usually we know the solution to the problem under consideration with some accuracy. And when calculating the derivative of an approximately known solution, we admit new inaccuracies. In connection with this observation, the question naturally arises: is it possible to represent the unknown order of the derivative in the form of a formula without involving the derivative of the solution to the direct problem? 

This work is devoted to the study of precisely this issue. We present a formula (Theorem \ref{T1}) for calculating the parameter $\rho$ under less stringent conditions on the initial function than in previous works. Moreover, when calculating an unknown parameter, there is no need to calculate the derivative of the solution to the initial boundary value problem.
\section{Main result}

The basis of many modern mathematical models describing the emergence and spread of the pandemic process is the subdiffusion equation (see, e.g., [36-38]). It is obtained from the diffusion equation by replacing the time derivative with a fractional derivative of order $\rho$, where $0<\rho<1$. Based on data [39] presented by Johns Hopkins University on outbreaks in different countries and  on numerous observations, it was found that this equation more adequately describes the process being studied and makes it possible to predict its development with greater accuracy (see, e.g., [40]). At the same time, there is no unanimity on the question of what the order $\rho$  of the fractional derivative should be. 

This paper proposes a solution to this inverse problem of finding the order $\rho$ in the case when diffusion does not lead to attenuation, but to growth over time as a geometric progression, which is typical for the pandemic process.

In this work, we understand the fractional derivative in the Riemann-Liouville sense, but the results of the work are also valid in the case of derivatives in the Caputo sense. Let us recall the corresponding definitions (see, e.g., [2], p. 92).

The Riemann-Liouville fractional  integral of order $\rho<0$ , of a
function $f$ defined on $[0,
\infty)$
is determined by the formula
\[
J_t^\rho f(t)=\frac{1}{\Gamma
(-\rho)}\int\limits_0^t\frac{f(\xi)}{(t-\xi)^{\rho+1}} d\xi, \quad
t>0,
\]
provided that the right-hand side exists. Here $\Gamma(z)$ is
the gamma function. Using this definition, one can define the Riemann-Liouville fractional derivative of order $\rho\in (0,1)$ as 
$$
\partial_t^\rho f(t)= \frac{d}{dt}J_t^{\rho-1} f(t).
$$
If we swap the integral and derivative in this equality, we obtain the definition of a derivative in the sense of Caputo of order $\rho$:
$$
D_t^\rho f(t)= J_t^{\rho-1} \frac{d}{dt}f(t).
$$
Note that if $\rho=1$, then the both fractional derivatives coincide with
ordinary classical derivative of the first order.

Let $0<\rho<1$. Consider the following Cauchy problem:
\begin{equation}\label{1}
\left\{
\begin{aligned}
&\partial_t^{\rho} u(x,t) + Au(x,t) = 0, \quad t>0,\\
&\underset{t\to 0}{\lim}\ J_t^{\rho-1}u(x,t)  =\ \phi, \\
\end{aligned}
\right.
\end{equation}
where $\phi\in C(\overline{\Omega})$ is the given initial function. Here $\Omega\subset \mathbb{R}^n$ is an arbitrary domain with a smooth boundary, $A:H\to H$ is an elliptic self-adjoint operator in the Hilbert space $H = L_2(\Omega)$ generated by the differential operation
\begin{equation}\label{2}
A(x,D)v(x)\ =\ -\ \sum\limits_{i,j=1}^n \frac{\partial }{\partial x_j}\left(a_{ij}(x) \frac{\partial v}{\partial x_i}\right)\ +\ c(x)v(x), \quad x\in\Omega, 
\end{equation}
and the boundary condition
\begin{equation}\label{3}
\frac{\partial v}{\partial \nu}\ +\ \sigma(x)v(x)\ =\ 0, \quad x\in \partial\Omega. 
\end{equation}
Here $\partial u /\partial \nu$ is the conormal derivative (see, e.g., [41], p. 89, (1.14)). 

We assume that the ellipticity condition is satisfied
\[
\sum\limits_{i,j=1}^n a_{ij}(x) \xi_i\xi_j\ \geq\ a_0 \sum\limits_{j=1}^n |\xi_j|^2, \quad a_0 >0, \quad x\in\overline{\Omega}, \quad \xi\in \mathbb{R}^n,
\]
as well as the usual condition $c(x)\geq0$ for elliptic operators.

From the ellipticity condition it follows that the self-adjoint operator $A$ is semi-bounded and its spectrum consists of eigenvalues
\[
\lambda_1 < \lambda_2 < \lambda_3 < ..., \quad \lambda_k\to+\infty, \quad (k\to+\infty),
\]
each of which has finite multiplicity (see [41], Chapter III, Theorem 3.5).

The spectral expansion of an arbitrary function $f\in L_2(\Omega)$ has the form
\[
f(x)\ =\ \sum\limits_{k=1}^\infty P_k f(x),
\]
where $P_k$ is the projection operator onto the eigensubspace corresponding to the eigenvalue $\lambda_k$.

It is known that for $\alpha>n/4$ the expansion of any function $f\in D(A^\alpha)$ converges absolutely and uniformly in the closed domain $\overline{\Omega}$ (see [42] for the Laplace operator and [43], Chapter 4, 16, for the general operator (\ref{2})). In this case, the inequality
\begin{equation}\label{4}
|f(x)|\ \leq\ \sum\limits_{k=1}^\infty |P_kf(x)|\ \leq\ C\|A^\alpha f\|_{L_2(\Omega)} 
\end{equation}
holds.

Each eigenfunction $v(x)$ corresponding to an eigenvalue $\lambda\in \mathbb{R}$ satisfies the differential equation
\[
A(x,D)v(x)\ =\ \lambda v(x)
\]
and boundary condition (\ref{3}).

For any smooth function $u(x)$ the identity
\[
\int\limits_{\Omega} [A(x,D)u(x)] u(x)\, dx\ =\ \int\limits_\Omega \sum\limits_{i,j=1}^n a_ {ij}(x) \frac{\partial u}{\partial x_i} \frac{\partial u}{\partial x_j}\, dx
\]
\[
+\ \int\limits_\Omega c(x) |u(x)|^2\, dx\ -\ \int\limits_{\partial\Omega} u(x)\frac{\partial u}{\partial \nu}\,ds(x)
\]
 holds, obtained by integration by parts (see, e.g., [41], p. 90). This implies for each eigenfunction $v(x)$ the equality
\begin{equation}\label{5}
\lambda\ =\ \int\limits_\Omega \sum\limits_{i,j=1}^n a_{ij}(x) \frac{\partial v}{\partial x_i} \frac{\partial v} {\partial x_j}\, dx\ +\ \int\limits_\Omega c(x) |v(x)|^2\, dx\ +\ \int\limits_{\partial\Omega} |v(x) |^2 \sigma(x)\, ds(x).
\end{equation}

Let us divide the boundary $\partial\Omega$ of the domain $\Omega$ into the following three parts:
\[
E^+(\sigma)\ =\ \{x\in\partial\Omega\ |\ \sigma(x)>0\},
\]
\[
E^-(\sigma)\ =\ \{x\in\partial\Omega\ |\ \sigma(x)<0\},
\]
\[
E^0(\sigma)\ =\ \{x\in\partial\Omega\ |\ \sigma(x)=0\}.
\]
The flow exits through the surface $E^+(\sigma)$, the flow enters through $E^-(\sigma)$, and at the points $E^0(\sigma)$ the flow is zero.

Note that in the case when $\text{mes}_{n-1}\,E^-(\sigma)>0$, i.e., the surface measure of the set $E^-(\sigma)$ is positive, it follows from equality (\ref{5}) that the eigenvalue may turn out to be negative. This is determined by the values of the coefficient $\sigma(x)$ included in the boundary condition (\ref{3}). At the same time, exponentially growing solutions appear, which can be interpreted as the emergence of a pandemic. This paper examines exactly this case.

Further, as shown below, the solution growth index significantly depends on $|\lambda_1|$, namely, for $|\lambda_1|>1$ it decreases with increasing $\rho$, for $|\lambda_1|<1$ it increases with $\rho$, and for $|\lambda_1|=1$ does not depend on $\rho$ at all. In other words, for $|\lambda_1| = 1$ solutions of the Cauchy problem (\ref{1}) for different $\rho$ behave essentially the same. Thus, to uniquely restore the fractional order $\rho$ from the leading term of the asymptotics, it is necessary to require that the condition $|\lambda_1|\neq 1$ be satisfied.

So our basic assumption is this:
\begin{equation}\label{6}
\lambda_1 < 0, \quad \lambda_1 \neq - 1. 
\end{equation}

Let us denote by the symbol $N_k(\phi)$ the zeros of the function $P_k\phi$:
\begin{equation}\label{7}
N_k(\phi)\ =\ \{x\in\Omega\ |\ P_k \phi(x) = 0\},
\end{equation}
and let $I$ be the identity operator. Fix an arbitrary number $\lambda> |\lambda_1|$.

The following statement is true.

\begin{thm}\label{T1}
    Let condition (\ref{6}) be satisfied and let $0<\rho<1$. Then for any function $\phi\in D((A+\lambda I)^\alpha)$, where $\alpha>n/4$, at each point $x\in\Omega\setminus N_1(\phi)$ the following equality 
\begin{equation}\label{8}
\frac{1}{\ln|\lambda_1|}\ \underset{t\to+\infty}{\lim}\ \left(\ln\ln |u(x,t)| - \ln t\right )\ =\ \frac{1}{\rho} 
\end{equation}
holds.
\end{thm}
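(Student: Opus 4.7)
The strategy is to obtain a sharp large-$t$ asymptotic of $u(x,t)$ and then read off (\ref{8}) by taking the double logarithm.

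First I would expand the solution of (\ref{1}) in the eigenbasis of $A$. Each Fourier mode reduces (\ref{1}) to the scalar Riemann--Liouville Cauchy problem $\partial_t^\rho y+\lambda_k y=0$, $\lim_{t\to 0}J_t^{\rho-1}y=1$, whose unique solution is $y_k(t)=t^{\rho-1}E_{\rho,\rho}(-\lambda_k t^\rho)$. Consequently,
$$u(x,t) \;=\; \sum_{k=1}^\infty t^{\rho-1}\, E_{\rho,\rho}(-\lambda_k t^\rho)\, P_k\phi(x).$$
Since $\lambda>|\lambda_1|$, the operator $A+\lambda I$ is positive, so inequality (\ref{4}) holds with $A$ replaced by $A+\lambda I$, and the hypothesis $\phi\in D((A+\lambda I)^\alpha)$ with $\alpha>n/4$ yields $\sum_k|P_k\phi(x)|\le C\|(A+\lambda I)^\alpha\phi\|_{L_2(\Omega)}$ uniformly in $x\in\overline{\Omega}$. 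This uniform absolute convergence is what legitimizes termwise manipulations of the series.

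Next I would insert the two-sided asymptotics of $E_{\rho,\rho}$. On the positive real axis,
$$E_{\rho,\rho}(z) \;=\; \frac{1}{\rho}\, z^{(1-\rho)/\rho} e^{z^{1/\rho}}\bigl(1+O(z^{-1})\bigr),\qquad z\to+\infty,$$
so for each of the (finitely many, by semiboundedness) negative eigenvalues $\lambda_k$ the time factor satisfies
$$t^{\rho-1} E_{\rho,\rho}(|\lambda_k| t^\rho) \;=\; \frac{|\lambda_k|^{(1-\rho)/\rho}}{\rho}\, e^{|\lambda_k|^{1/\rho} t}\bigl(1+O(t^{-1})\bigr).$$
On the negative real axis the Mittag--Leffler function is uniformly bounded, $|E_{\rho,\rho}(-\xi)|\le C$ for $\xi\ge 0$, so the total contribution of all nonnegative eigenvalues is bounded by $C\, t^{\rho-1}\sum_{\lambda_k\ge 0}|P_k\phi(x)| = O(t^{\rho-1}) = o(1)$ as $t\to\infty$. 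Because $\lambda_1<\lambda_k$ forces $|\lambda_k|<|\lambda_1|$ whenever $\lambda_k<0$, every subdominant growth rate $|\lambda_k|^{1/\rho}$ is strictly less than $|\lambda_1|^{1/\rho}$, so for $x\in\Omega\setminus N_1(\phi)$
$$u(x,t) \;=\; \frac{|\lambda_1|^{(1-\rho)/\rho}}{\rho}\, P_1\phi(x)\, e^{|\lambda_1|^{1/\rho} t}\bigl(1+o(1)\bigr),\qquad t\to+\infty.$$

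Finally, taking logarithms twice, the pre-exponential factor contributes a bounded summand that is absorbed after the second logarithm:
$$\ln|u(x,t)| \;=\; |\lambda_1|^{1/\rho} t + O(1),\qquad \ln\ln|u(x,t)| - \ln t \;=\; \frac{1}{\rho}\ln|\lambda_1| + o(1),$$
and dividing by $\ln|\lambda_1|\ne 0$ (which is precisely the content of (\ref{6})) yields (\ref{8}). The principal obstacle is the uniform control of the tail of the spectral expansion that is needed to upgrade the termwise Mittag--Leffler asymptotics to a genuine pointwise asymptotic for $u$; this is exactly where the smoothness hypothesis $\alpha>n/4$ and the embedding (\ref{4}) do the heavy lifting. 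Once this is in place, the strict separation $|\lambda_1|^{1/\rho}>|\lambda_k|^{1/\rho}$ for $k\ge 2$ renders the leading exponential unambiguous and the double logarithm kills every pre-exponential constant, including the $x$-dependent factor $P_1\phi(x)$ (which is nonzero by the assumption $x\notin N_1(\phi)$).
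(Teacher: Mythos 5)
Your proposal is correct and follows essentially the same route as the paper: the spectral expansion $u=\sum_k t^{\rho-1}E_{\rho,\rho}(-\lambda_k t^\rho)P_k\phi$, the exponential Mittag--Leffler asymptotics for the finitely many negative eigenvalues versus uniform boundedness on the negative real axis for the rest, uniform absolute convergence via (\ref{4}) with $\alpha>n/4$ (correctly stated for $A+\lambda I$), isolation of the dominant first mode for $x\notin N_1(\phi)$, and the double logarithm. This matches the paper's Lemmas \ref{L1} and \ref{L2} and the concluding computation, so no further comparison is needed.
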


\begin{remark}\label{R1} Below, during the proof of Theorem \ref{T1}, we will show that for sufficiently large values of $t$ the inequality $|u(x,t)|>1$ holds.
\end{remark}

\section{Proof of Theorem \ref{T1}}

The proof of Theorem \ref{T1} is based on the spectral expansion of the solution to problem (\ref{1}) in terms of the eigenfunctions of the operator $A$. The Mittag-Leffler function, defined as (see, e.g., [2], p. 42)
\[
E_{\rho,\mu}(t)\ =\ \sum\limits_{m=0}^\infty \frac{t^m}{\Gamma(\rho m+\mu)}
\]
plays an important role in this expansion.

Using this function, the solution to problem (\ref{1}) can be written in the form (see, e.g., [18])
\begin{equation}\label{9}
u(x,t)\ =\ \sum\limits_{k=1}^\infty t^{\rho-1} E_{\rho,\rho}(-\lambda_k t^\rho)P_k \phi( x).
\end{equation}

Let us consider in more detail the first term of expansion (\ref{9}), taking into account that $\lambda_1<0$:
\[
u_1(x,t)\ =\ t^{\rho-1} E_{\rho,\rho}(|\lambda_1| t^\rho) P_1 \phi(x).
\]

\begin{lem}\label{L1} Under the conditions of Theorem \ref{T1}, the equality
\begin{equation}\label{10}
\ln |u_1(x,t)|\ =\ t|\lambda_1|^{1/\rho} \left[1 + o(1)\right], \quad t\to+\infty, \end{equation}
is satisfied.
\end{lem}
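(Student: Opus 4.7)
\textbf{Proof plan for Lemma \ref{L1}.} My plan is to apply the classical asymptotic expansion of the Mittag-Leffler function $E_{\rho,\rho}(z)$ for $z\to+\infty$ and to observe that, for the specific second index $\mu=\rho$ occurring in (9), the algebraic prefactors collapse cleanly against the outer factor $t^{\rho-1}$, leaving a pure exponential in $t$.

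Concretely, I would start from the standard asymptotic (see, e.g., [2], Ch.~1)
\[
E_{\rho,\rho}(z)\ =\ \frac{1}{\rho}\, z^{(1-\rho)/\rho}\, e^{z^{1/\rho}}\ +\ O(|z|^{-2}),\qquad z\to+\infty,
\]
which is valid for $0<\rho<1$; the would-be $z^{-1}$ correction is absent because $1/\Gamma(0)=0$. Substituting $z=|\lambda_1|\, t^\rho$ and multiplying by $t^{\rho-1}$ produces
\[
t^{\rho-1}\, E_{\rho,\rho}(|\lambda_1|\, t^\rho)\ =\ \frac{|\lambda_1|^{(1-\rho)/\rho}}{\rho}\, e^{|\lambda_1|^{1/\rho}\, t}\ +\ O(t^{-\rho-1}),
\]
so that, for $t\to+\infty$,
\[
u_1(x,t)\ =\ \frac{|\lambda_1|^{(1-\rho)/\rho}}{\rho}\, P_1\phi(x)\, e^{|\lambda_1|^{1/\rho}\, t}\bigl[1+o(1)\bigr].
\]

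Next I would take logarithms. At any $x\in\Omega\setminus N_1(\phi)$ the value $P_1\phi(x)$ is nonzero by definition (7), and it is finite because the hypothesis $\phi\in D((A+\lambda I)^\alpha)$ with $\alpha>n/4$, together with inequality (4), forces each spectral component $P_k\phi$ to be continuous on $\overline{\Omega}$. Consequently
\[
\ln|u_1(x,t)|\ =\ |\lambda_1|^{1/\rho}\, t\ +\ \ln\Bigl|\tfrac{|\lambda_1|^{(1-\rho)/\rho}}{\rho}\, P_1\phi(x)\Bigr|\ +\ o(1),
\]
and the bounded second term is negligible compared with the linearly growing first; dividing through by $|\lambda_1|^{1/\rho}\, t$ yields (10).

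I do not foresee a genuine obstacle: the proof reduces to a single invocation of the Mittag-Leffler asymptotic plus careful bookkeeping of exponents. The only step requiring attention is the exact cancellation of the factor $t^{1-\rho}$ produced by the leading term of the asymptotic against the external $t^{\rho-1}$ -- this is precisely why the choice $\mu=\rho$ of the second index in the Mittag-Leffler function is crucial; any other second index would produce a residual polynomial factor which would have to be absorbed into the leading constant and tracked separately through the logarithm.
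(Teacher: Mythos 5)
Your proposal is correct and follows essentially the same route as the paper: the same Mittag-Leffler asymptotic $E_{\rho,\rho}(z)=\tfrac1\rho z^{1/\rho-1}e^{z^{1/\rho}}+O(z^{-2})$ (note $z^{1/\rho-1}=z^{(1-\rho)/\rho}$), the same cancellation of the prefactor against $t^{\rho-1}$ after substituting $z=|\lambda_1|t^\rho$, and the same absorption of the bounded logarithmic term $\ln\bigl|\tfrac1\rho|\lambda_1|^{1/\rho-1}P_1\phi(x)\bigr|$ into the $o(1)$ relative error, using $P_1\phi(x)\neq 0$ on $\Omega\setminus N_1(\phi)$. Your bookkeeping of the error term as $O(t^{-\rho-1})$ is in fact slightly more careful than the paper's stated $O(t^{-2})$, but this difference is immaterial to the conclusion.
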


\begin{proof}
    Let us denote $z = |\lambda_1| t^\rho$. Then, taking into account the formula
\begin{equation}\label{11}
E_{\rho,\rho}(z)\ =\ \frac1\rho z^{1/\rho - 1} e^{z^{1/\rho}}\ +\ O(1/z^2 ), \quad z>1, 
\end{equation}
(see [11], (1.1.13)), we get
\[
u_1(x,t)\ =\ t^{\rho-1} E_{\rho,\rho}(z) P_1 \phi(x)\ =\ t^{\rho-1} \frac1\rho z ^{1/\rho - 1} e^{z^{1/\rho}} P_1 \phi(x)\ +\ O(1/z^2), \quad z>1.
\]
Hence,
\[
u_1(x,t)\ =\ \frac1\rho\, |\lambda_1|^{1/\rho - 1}\ e^{\,t|\lambda_1|^{1/\rho}} P_1 \phi (x)\ +\ O(1/t^2), \quad t\to+\infty.
\]

Let us put
\[
\beta(x)\ =\ \frac1\rho\, |\lambda_1|^{1/\rho - 1} |P_1 \phi(x)|.
\]
From definition (\ref{7}) and from the conditions of Theorem \ref{T1} it follows that $P_1\phi(x) \neq 0$. Therefore, $\beta(x) > 0$, and hence
\begin{equation}\label{12}
|u_1(x,t)|\ =\ \beta(x)\ e^{\,t|\lambda_1|^{1/\rho}}\left[1 + O(t^{-2}) e ^{\,-t|\lambda_1|^{1/\rho}}\right], \quad t\to+\infty, 
\end{equation}
or
\[
\ln |u_1(x,t)|\ =\ t|\lambda_1|^{1/\rho}\ +\ \ln\beta(x)\ +\ \ln\left[1 + o(1)\right]\ =\ t|\lambda_1|^{1/\rho}\ +\ O(1).
\]
The last equality can be written as follows:
\[
\ln |u_1(x,t)|\ =\ t|\lambda_1|^{1/\rho}\left[1\ +\ \frac{O(1)}{t|\lambda_1|^{1/ \rho}}\right].
\]
This implies the required estimate (\ref{10}).
\end{proof}
\begin{lem}\label{L2}Under the conditions of Theorem \ref{T1}, the estimate
\begin{equation}\label{13}
u(x,t)\ =\ u_1(x,t) \left[1 +\ o(1)\right],\quad x\in \Omega\setminus N_1(\phi), \quad t\to+\infty, 
\end{equation}
is satisfied.   
\end{lem}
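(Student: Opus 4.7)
The plan is to decompose the solution as $u(x,t) = u_1(x,t) + R(x,t)$, where
\[
R(x,t)\ =\ \sum_{k=2}^\infty t^{\rho-1}\, E_{\rho,\rho}(-\lambda_k t^\rho)\, P_k\phi(x),
\]
and to show that $R(x,t) = o(u_1(x,t))$ as $t\to+\infty$ at every $x \in \Omega\setminus N_1(\phi)$. Since $\lambda_k\to+\infty$, only finitely many eigenvalues are negative. Let $\lambda_2,\dots,\lambda_M$ denote those with $\lambda_k < 0$ (possibly $M=1$, an empty block). The strict ordering $\lambda_1 < \lambda_k$ and assumption (\ref{6}) give $|\lambda_k|^{1/\rho} < |\lambda_1|^{1/\rho}$ for $2\leq k \leq M$, and this strict inequality is the engine of the proof.

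For the finite negative block $2 \leq k \leq M$, I would reapply the asymptotic expansion (\ref{11}) exactly as in Lemma \ref{L1}, obtaining
\[
t^{\rho-1}\,E_{\rho,\rho}(|\lambda_k|t^\rho)\,P_k\phi(x)\ =\ \frac{1}{\rho}\,|\lambda_k|^{1/\rho-1}\,e^{\,t|\lambda_k|^{1/\rho}}\,P_k\phi(x)\ +\ O(t^{-\rho-2}).
\]
Comparing with the leading asymptotics $|u_1(x,t)| \sim \beta(x)\,e^{\,t|\lambda_1|^{1/\rho}}$ from (\ref{12}), each such summand is $O\!\left(e^{-t(|\lambda_1|^{1/\rho}-|\lambda_k|^{1/\rho})}\right)\cdot |u_1(x,t)|$, which is $o(|u_1(x,t)|)$ by the strict inequality above, uniformly for the finitely many $k$'s.

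For the infinite positive tail $k > M$ (where $\lambda_k > 0$), I would invoke the standard global bound $|E_{\rho,\rho}(-s)| \leq C/(1+s)$ for $s \geq 0$, obtaining
\[
\left|\sum_{k>M} t^{\rho-1}\,E_{\rho,\rho}(-\lambda_k t^\rho)\,P_k\phi(x)\right|\ \leq\ C\,t^{\rho-1}\sum_{k>M} |P_k\phi(x)|.
\]
The series on the right converges absolutely and uniformly on $\overline{\Omega}$ by applying the analogue of the pointwise bound (\ref{4}) to the positive self-adjoint operator $A+\lambda I$ (which has eigenvalues $\lambda_k+\lambda > 0$ for all $k$) at the regularity $\alpha > n/4$. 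Hence this contribution is $O(t^{\rho-1})$, which, divided by $|u_1(x,t)|$ that grows like $e^{\,t|\lambda_1|^{1/\rho}}$, is exponentially small and hence $o(1)$. Combining the two blocks gives $R(x,t) = o(u_1(x,t))$; division by $u_1(x,t)$, which is nonzero for all sufficiently large $t$ by (\ref{12}) and the hypothesis $x \notin N_1(\phi)$, yields (\ref{13}).

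The main obstacle is the infinite positive tail: here one needs a \emph{pointwise} absolute bound on $\sum_{k>M}|P_k\phi(x)|$, not merely $L_2$-convergence, and this is precisely what forces the Sobolev-type assumption $\phi\in D((A+\lambda I)^\alpha)$ with $\alpha>n/4$. The passage from $A$ to the shifted operator $A+\lambda I$ with $\lambda > |\lambda_1|$ is essential, since $A$ itself is only semi-bounded and $A^\alpha$ cannot be formed directly on the negative spectral subspace; choosing $\lambda > |\lambda_1|$ makes $A+\lambda I$ positive definite, so the spectral calculus and the estimate (\ref{4}) apply verbatim.
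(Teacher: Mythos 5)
Your proposal is correct and follows essentially the same route as the paper: the same splitting of the spectral expansion into the leading term, the finite block of remaining negative eigenvalues (handled with the asymptotics (\ref{11}) and the strict gap $|\lambda_k|^{1/\rho}<|\lambda_1|^{1/\rho}$), and the infinite tail (handled with the boundedness of $E_{\rho,\rho}(-z)$ for $z\geq 0$ plus the pointwise bound (\ref{4})), finishing with the lower bound $|u_1(x,t)|\geq C(x)e^{t|\lambda_1|^{1/\rho}}$ valid for $x\notin N_1(\phi)$. Your explicit remark that the pointwise estimate must be applied through the positive shifted operator $A+\lambda I$ is a useful clarification of a point the paper treats only implicitly, but it does not change the argument.
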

\begin{proof} Consider the representation of the solution to problem (\ref{1}) in the form of expansion (\ref{9}). Let us assume that the eigenvalues (ignoring multiplicity) of $\lambda_k$ with numbers $k\leq m$ are negative. In accordance with this, the value of the expansion (\ref{9}) is the sum of three terms:
\begin{equation}\label{14}
u(x,t)\ =\ u_1(x,t)\ +\ \Sigma'\ +\ \Sigma'', 
\end{equation}
where
\[
\Sigma'\ =\ \sum\limits_{k=2}^m t^{\rho-1} E_{\rho,\rho}(-\lambda_k t^\rho)P_k \phi(x),
\]
and
\[
\Sigma''\ =\ \sum\limits_{k=m+1}^\infty t^{\rho-1} E_{\rho,\rho}(-\lambda_k t^\rho)P_k \phi( x).
\]

To estimate the sum $\Sigma'$, consider two cases.

1) Let us first assume that $\lambda_2 < 0$. Then $m\geq2$. Let us show that there is $T>0$ such that the estimate
\begin{equation}\label{15}
|\Sigma'|\ \leq\ Ce^{t|\lambda_2|^{1/\rho}} \|A^\alpha\phi\|, \quad t \geq T, 
\end{equation}
holds. Taking into account that $\lambda_k<0$ for $k\leq m$, from (\ref{11}) we get:
\[
|\Sigma'|\ \leq\ t^{\rho-1} \sum\limits_{k=2}^m E_{\rho,\rho}(-\lambda_k t^\rho)|P_k \phi( x)|
\]
\[
\leq\ \frac{C}{\rho}\, \sum\limits_{k=2}^m |\lambda_k|^{1/\rho - 1} e^{t|\lambda_k|^{1/ \rho}}|P_k \phi(x)|\ \leq\ \frac{C}{\rho}\,|\lambda_2|^{1/\rho - 1} e^{t|\lambda_2|^{ 1/\rho}} \sum\limits_{k=2}^m |P_k \phi(x)|.
\]
Now apply (\ref{4}) to obtain estimate (\ref{15}).

2) Let now $\lambda_2\geq0$. In this case $\Sigma' = 0$ and we can formally write
\begin{equation}\label{16}
|\Sigma'|\ \leq\ C, \quad t\geq 0. 
\end{equation}
Let us introduce the notation
\[
\epsilon\ =\ \begin{cases} |\lambda_1|^{1/\rho} - |\lambda_2|^{1/\rho}, & \text{if} \quad \lambda_2 < 0,\\
|\lambda_1|^{1/\rho}, & \text{if} \quad \lambda_2 \geq 0.\\
\end{cases}
\]
Then, combining estimates (\ref{15}) and (\ref{16}), we obtain the inequality
\begin{equation}\label{17}
|\Sigma'|\ \leq\ Ce^{t|\lambda_1|^{1/\rho}} e^{-\epsilon t}, \quad t \geq T, 
\end{equation}
valid for any value of $\lambda_2$.
To estimate $\Sigma''$ we use the estimate (see [11], (1.1.14))
\[
E_{\rho,\rho}(-z)\ =\ \frac{z^{-2}}{|\Gamma(-\rho)|}\ +\ \frac{O(1)}{z^ 3}, \quad z\to+\infty .
\]
Hence,
\[
|E_{\rho,\rho}(-z)|\ \leq\ C, \quad z\geq 0.
\]
Then from (\ref{4}) we obtain
\[\
|\Sigma''|\ \leq\ t^{\rho-1} \sum\limits_{k=m+1}^\infty \big|E_{\rho,\rho}(-\lambda_k t^\rho)\big| |P_k \phi(x)|
\]
\begin{equation}\label{18}
\leq\ C t^{\rho-1} \sum\limits_{k=m+1}^\infty |P_k \phi(x)|\ \leq\ C t^{\rho-1} \|A ^{\alpha}\phi\|. 
\end{equation}
Thus, by (\ref{17}) and (\ref{18}) one has
\begin{equation}\label{19}
|\Sigma' + \Sigma''|\ \leq\ C e^{t|\lambda_1|^{1/\rho}} e^{-\epsilon t}, \quad t\geq T. 
\end{equation}

Next, we rewrite expansion (\ref{14}) as follows:
\begin{equation}\label{20}
u(x,t)\ =\ u_1(x,t)\left[1\ +\ \frac{\Sigma' + \Sigma''}{u_1(x,t)}\right]. 
\end{equation}
Note that from (\ref{12}) and from the condition $x\in \Omega\setminus N_1(\phi)$ the inequality 
\[
|u_1(x,t)|\ \geq\ C(x) e^{\,t|\lambda_1|^{1/\rho}}, \quad C(x) >0, \quad t\geq 0 ,
\]
follows. Then, from (\ref{19}) and (\ref{20}) we obtain
\[
	u(x,t)\ =\ u_1(x,t)\left[1\ +\ O(e^{-\epsilon t})\right] .
	\]
This implies the required estimate (\ref{13}).
\end{proof}

Now we are ready to prove Theorem \ref{T1}. First of all, note that Lemma \ref{L2} implies the estimate
\[
\ln |u(x,t)|\ =\ \ln |u_1(x,t)|\ +\ o(1),\quad x\in \Omega\setminus N_1(\phi), \quad t\to+\infty.
\]
Further, from Lemma \ref{L1} it follows that there is $T>0$ such that
\[
\ln |u_1(x,t)|\ >\ 1, \quad t\geq T.
\]
Hence,
\[
\ln |u(x,t)|\ =\ \ln |u_1(x,t)|\left[1 +\ \frac{o(1)}{\ln |u_1(x,t)|}\right]\ =\ \ln |u_1(x,t)|\left[1 +\ o(1)\right].
\]
Applying Lemma \ref{L1} again, we get
\[
\ln |u(x,t)|\ =\ t|\lambda_1|^{1/\rho} \left[1 + o(1)\right]\left[1 +\ o(1)\right] \ =\ t|\lambda_1|^{1/\rho} \left[1 + o(1)\right].
\]
Then, taking the logarithm of both sides of this equality, we obtain:
\begin{equation}\label{21}
\ln\ln |u(x,t)|\ =\ \ln t\ +\ \frac1\rho\, \ln|\lambda_1|\ +\ o(1),\quad x\in \Omega\setminus N_1(\phi), \quad t\to+\infty. 
\end{equation}
This implies the required relation (\ref{8}). Theorem \ref{T1} is proved.

\section{Example} Consider a three-dimensional cylindrical domain $\Omega$ of the following form:
\[
\Omega\ =\ D\times (0,H),
\]
where $D$ is a flat domain with piecewise smooth boundary $\partial D$. Let us consider the following problem in this domain
\begin{equation}\label{22}
\partial_t^\rho u(x,y,z,t)\ =\ \Delta u(x,y,z,t), \quad (x,y)\in D, \quad 0<z<h, \quad t >0, 
\end{equation}
where $\rho\in (0,1)$.
The function $u(x,y,z,t)$ is assumed to satisfy the initial condition
\begin{equation}\label{23}
\underset{t\to 0}{\lim}\ J_t^{\rho-1}u(x,y,z,t)  =\  \phi(x,y,z),
\end{equation}
and boundary conditions
\begin{equation}\label{24}
\frac{\partial u}{\partial n}(x,y,z,t)\ +\ \sigma u(x,y,z,t) = 0, \quad (x,y)\in \partial D,\ 0<z<H, \ t>0,
\end{equation}
and
\begin{equation}\label{25}
\frac{\partial u}{\partial z}(x,y,0,t) = 0, \quad \frac{\partial u}{\partial z}(x,y,H,t) - hv( x,y,H,t) = 0, \quad (x,y)\in D, \quad t>0. 
\end{equation}
In conditions (\ref{25}) it is assumed that $h>0$.

Condition (\ref{24}) means that through the cylindrical surface $\partial D\times[0,H]$ located above the boundary of the domain $D$, leakage occurs with a flow proportional to the concentration at the boundary, while, according to condition (\ref{25}), through the surface $S = \{(x,y)\in D, z=H\}$ a similar diffusion occurs, directed inside the domain $\Omega$ (see [44], Chapter III,  1). Exponential growth leading to a pandemic occurs when the relationships between $\sigma$ and $h$ cause a negative eigenvalue to appear.

Let us assume that $\sigma = 0$. This means that the boundary $\partial D\times [0,H]$ is closed and the flow through it is zero. Condition (\ref{24}) then takes the form
\begin{equation}\label{26}
\frac{\partial u}{\partial n}(x,y,z,t)\ =\ 0, \quad (x,y,z)\in \partial D\times (0,H), \ t>0. 
\end{equation}

Let us consider the spectral problem in the two-dimensional domain $D$:
\[
\Delta v(x,y) + \mu v(x,y)\ =\ 0, \quad (x,y)\in D, \quad \frac{\partial v}{\partial n}(x,y) = 0, \ (x,y)\in \partial D.
\]

Let $\mu_1, \mu_2, ...\ -$ be a sequence of eigenvalues, and $v_1(x,y)$, $v_2(x,y), ...\ -$ be the corresponding sequence of eigenfunctions.

Obviously, the first eigenvalue $\mu_1=0$ is prime (i.e., its multiplicity is 1), and the first eigenfunction is equal to $v_1(x,y) = 1/\sqrt{|D|}$.

Next, consider the eigenvalue problem on the interval $[0,H]$ with the spectral parameter $\nu$:
\begin{equation}\label{27}
\left\{
\begin{aligned}
&w''(z) + \nu w(z)  = 0, \quad 0<z<H,\\
&w'(0)  = 0,\\
&w'(H) - h w(H)  = 0.\\
\end{aligned}
\right.
\end{equation}

Positive solutions $\nu_1, \nu_2, ...$  of equations
\[
\tan(H\sqrt{\nu})\ = - \ \frac{h}{\sqrt{\nu}}
\]
are eigenvalues, and the corresponding eigenfunctions have the form
\[
w_j(z) = M_j\cos z\sqrt{\nu_j},
\]
where $M_j$ are positive normalizing factors.

Spectral problem (\ref{27}) also has a unique negative eigenvalue $\nu_0<0$, which is a solution to the equation
\[
\tanh (H\sqrt{-\nu_0}) = \frac{h}{\sqrt{-\nu_0}}.
\]
The function $w_0(z) = M_0 \cosh z \sqrt{-\nu_0}>0$ is an eigenfunction corresponding to the negative eigenvalue $\nu_0$, where $M_0$ is a positive normalizing factor.

The orthonormal system $\{w_j(z)\}_{j=0}^\infty$ is complete in $L_2(0,H)$ (see [45]).

In this case, the self-adjoint operator $A = - \Delta$, generated by the boundary conditions (\ref{25}) and (\ref{26}), has the following complete orthonormal system of eigenfunctions and sequence of eigenvalues:
\[
u_{ij}(x,y,z) = v_i(x,y)\cdot w_j(z), \quad \lambda_{ij} = \mu_i + \nu_j \quad i=1,2,\cdots, \quad j=0,1,2,\cdots.
\]

Note that the first eigenvalue of the operator $A$ is negative and equal to
\[
\lambda_{10} = \mu_1 + \nu_0= \nu_0<0,
\]
and the corresponding eigenfunction is positive and equal to
\[
u_{10}(x,y,z) = v_1(x,y)\cdot w_0(z) = \frac{M_0}{\sqrt{|D|}} \cosh z \sqrt{-\nu_0} > 0.
\]

The symbol $u(x,y,z,t)$ denotes the solution to the Cauchy problem (\ref{22})-(\ref{23}) satisfying the boundary conditions (\ref{25}) and (\ref{26}).

The following statement is true.

\begin{thm}\label{T2} Let $0<\rho<1$ and let $\phi\in W_2^2(\Omega)$ satisfy conditions (\ref{25}) and (\ref{26}) and be positive in the domain $\Omega $. Then for any point $(x,y,z)\in\Omega$ the equality 
\begin{equation}\label{28}
\underset{t\to+\infty}{\lim}\ \left(\ln\ln |u(x,y,z,t)| - \ln t\right)\ =\ \frac{\ln|\lambda_ {10}|}{\rho}\  
\end{equation}
holds.
\end{thm}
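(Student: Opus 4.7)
The plan is to derive Theorem \ref{T2} as a direct corollary of Theorem \ref{T1} applied to $A = -\Delta$ with the mixed boundary conditions (\ref{25})--(\ref{26}) on $\Omega = D\times(0,H)$. The bulk of the work is bookkeeping: identifying the first eigenvalue and eigenfunction, verifying the smoothness of $\phi$, and checking that the exceptional set $N_1(\phi)$ is empty so that the conclusion holds at every point of $\Omega$.

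First I would identify the minimal eigenvalue of $A$. Since the eigenvalues decompose as $\lambda_{ij} = \mu_i + \nu_j$ with $\mu_1 = 0$ the smallest of the $\mu_i$ and $\nu_0 < 0 < \nu_1 < \nu_2 < \dots$, the smallest eigenvalue of $A$ is $\lambda_{10} = \nu_0 < 0$, whose one-dimensional eigenspace is spanned by the strictly positive function $u_{10}(x,y,z) = (M_0/\sqrt{|D|})\cosh(z\sqrt{-\nu_0})$. Assuming, as is implicit in the statement (otherwise the right-hand side of (\ref{28}) vanishes), that $\nu_0 \neq -1$ (equivalently $\tanh H \neq h$), condition (\ref{6}) of Theorem \ref{T1} is satisfied.

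Next I would check the smoothness hypothesis $\phi \in D((A+\lambda I)^\alpha)$ for some $\alpha > n/4 = 3/4$. Since $\phi \in W_2^2(\Omega)$ obeys the boundary conditions (\ref{25})--(\ref{26}), it lies in $D(A)$, so fixing any $\lambda > |\nu_0|$ and taking $\alpha = 1$ suffices. The crucial step is to show $N_1(\phi) = \emptyset$. Because the first eigenspace is one-dimensional, the projection has the explicit form
\[
P_1\phi(x,y,z)\ =\ \left(\int_{\Omega}\phi(\xi,\eta,\zeta)\, u_{10}(\xi,\eta,\zeta)\,d\xi\, d\eta\, d\zeta\right) u_{10}(x,y,z).
\]
Both $\phi$ and $u_{10}$ are strictly positive on $\Omega$, so the scalar coefficient is strictly positive, and $u_{10}(x,y,z) > 0$ pointwise. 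Hence $P_1\phi(x,y,z) > 0$ throughout $\Omega$ and $N_1(\phi) = \emptyset$.

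Applying Theorem \ref{T1} with $\lambda_1 = \lambda_{10} = \nu_0$ then yields equality (\ref{8}) at every point $(x,y,z) \in \Omega$; multiplying both sides by $\ln|\lambda_{10}|$ gives precisely (\ref{28}). The only step with any substance is the pointwise positivity of $P_1\phi$, which leans essentially on the simplicity of the first eigenvalue together with strict positivity of its eigenfunction and of $\phi$; for a sign-changing initial datum the conclusion would survive only outside $N_1(\phi)$, in line with the more general Theorem \ref{T1}.
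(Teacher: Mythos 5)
Your proposal is correct and takes essentially the same route as the paper: identify $\lambda_{10}=\nu_0<0$ as the simple first eigenvalue with strictly positive eigenfunction, note $\phi\in D(A)$ gives $\alpha=1>3/4$, deduce $P_1\phi>0$ so $N_1(\phi)=\emptyset$, and read off (\ref{28}) from the asymptotics behind Theorem \ref{T1}. Your explicit caveat $\nu_0\neq-1$ is a reasonable touch the paper leaves implicit (the paper invokes the asymptotic relation (\ref{21}) directly, which holds even when $|\lambda_{10}|=1$, whereas formula (\ref{8}) itself would then degenerate).
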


\begin{proof} Let us make sure that all the conditions of Lemma \ref{L2} are satisfied. First of all, we note that the function $\phi$ satisfying the conditions of Theorem \ref{T2} belongs to the domain of definition of the operator $A^\alpha$, where $\alpha = 1 $. Since the domain $\Omega$ is three-dimensional, the condition $\alpha > n/4$ is satisfied.

Further, the first eigenfunction $v_{1}(x,y,z)$ is positive, so the multiplicity of the first eigenvalue is 1. Then for any function $\phi\in L_2(\Omega)$ the equality
\[
P_1\phi(x,y,z)\ =\ (\phi,v_1)v_1(x,y,z)
\]
holds.

If $\phi(x,y,z)>0$, then $(\phi,v_1)>0$, and therefore
\[
P_1\phi(x,y,z)\ >\ 0, \quad (x,y,z)\in\Omega.
\]
Consequently, $N_1(\phi) = \oslash$, i.e. $\Omega\setminus N_1(\phi) = \Omega$ (see definition (\ref{7})).

Since all the conditions of Lemma \ref{2} are satisfied, we can use the asymptotic equality (\ref{21}). From this equality the required relation (\ref{28}) follows.
\end{proof}
\begin{remark} Obviously, instead of the condition $\phi(x,y,z)>0$, it is sufficient to require the inequality $(\phi, v_1)\neq 0$.
\end{remark}
\begin{remark}  Equality (\ref{28}), as well as equality (\ref{8}), is also satisfied for $\rho=1$ (see [44], Chapter VI, 2, p. 516), and even when $\lambda_1$ is an arbitrary real number. However, in the case $0<\rho<1$ considered by us, the requirement $\lambda_1<0$ is essential for equality (\ref{28}) to be fulfilled.
\end{remark}
\

\section{Conclusions}

In this paper, we study the inverse problem of determining
the order of a fractional derivative in subdiffusion equations. The subdiffusion equation has proven to be extremely important for more accurately modeling many physical phenomena and processes (for example, the spread of a pandemic process , see, e.g., [36]-[38], [40]). Determining the order $\rho$ of the fractional differential operator in this equation is definitely critical for proper modeling of anomalous diffusion. Since there is no tool to measure this parameter, the inverse problem of finding the unknown order should be considered. In work [34] to calculate $\rho$, the formula $\rho= \lim\limits_{t\to \infty} \big((tu_t)/u\big)$ is proposed, in which the derivative of the solution to the direct problem is involved.

In this work, when condition (\ref{6}) is satisfied, another formula for $\rho$ is obtained, without the participation of the derivative $u_t$ (Theorem \ref{T1}). A fairly natural example of an initial boundary value problem is given, where condition (\ref{6}) holds. It should also be noted that the formula we have proved holds under less restrictive conditions on the initial function than in work [34].


\end{document}